\renewcommand{\leq}{\leqslant}
\renewcommand{\L}{\mathcal{T}}
\newcommand{\R}{\mathbb{R}}
\newcommand{\C}{\mathbb{C}}
\newtheorem{theorem}{Theorem}[section]
\newtheorem{lemma}[theorem]{Lemma}
\newtheorem{definition}[theorem]{Definition}
\newtheorem*{main-theorem}{Main Theorem}
\newtheorem*{remark*}{Remark}
\newtheorem*{case*}{Case}
\numberwithin{equation}{section}
\begin{document}

\title{Spectral stability of periodic traveling waves in Caudrey-Dodd-Gibbon-Sawada-Kotera Equation}

\author{Sudhir Singh\textsuperscript{*}}
\email{ssingh8@gitam.edu}
\affiliation{Department of Mathematics, GITAM University, Bangalore 560012, India}

\author{Nitesh Sharma}
\email{niteshs@iiitd.ac.in}
\affiliation{Department of Mathematics, IIIT Delhi, New Delhi 110020, India}

\author{Ashish Kumar Pandey}
\email{ashish.pandey@iiitd.ac.in}
\affiliation{Department of Mathematics, IIIT Delhi, New Delhi 110020, India}

\date{\today}

\begin{abstract}
We study the spectral stability of the one-dimensional small-amplitude periodic traveling wave solutions of the (1+1)-dimensional Caudrey-Dodd-Gibbon-Sawada-Kotera equation. We show that these waves are spectrally stable with respect to co-periodic as well as square integrable perturbations. 
\end{abstract}

\maketitle

\section{Introduction}
The Caudrey--Dodd--Gibbon--Sawada--Kotera (CDG--SK) equation is a nonlinear fifth-order evolution equation  written as
\begin{equation}\label{eq:CDGSK}
    u_t+u_{xxxxx}+15 (uu_{xx}+u^3)_x=0,
\end{equation}
which is a member of the KdV-type integrable hierarchy\cite{caudrey1976new}.
It was discovered in the 1970s as one of the analogs of the Korteweg--de Vries (KdV) equation that exhibits complete integrability\cite{sawada1974method}.
Sawada and Kotera introduced a ``KdV-like'' fifth-order equation and constructed $N$-soliton solutions\cite{sawada1974method}, while Caudrey, Dodd, and Gibbon derived the CDG equation within a new integrable KdV hierarchy\cite{caudrey1976new}.
These equations are related via Miura-type transformations and share integrability properties like Lax pairs, conservation laws, and soliton solutions.
Integrability was further confirmed by B\"acklund transformations\cite{satsuma1977backlund} and Painlev\'e analysis\cite{weiss1984classes}.

\subsection{Integrability}
The CDG--SK equation admits  Lax pair formulations.
Kaup introduced a third-order spectral problem to derive the SK/CDG equation as part of the inverse scattering framework\cite{Kaup1980}.
Caudrey et al.\cite{caudrey1976new} and Dodd and Gibbon\cite{dodd1978prolongation} provided a $2 \times 2$ matrix Lax pair and analyzed the equation's prolongation structure.
 Fuchssteiner \& Oevel proposed the bi-Hamiltonian formalism and conserved covariants\cite{fuchssteiner1982bi}, and Aiyer et al.\cite{aiyer1986solitons} constructed the recursion operator.
Konno studied the conservation laws\cite{konno1992conservation}, and Lou studied a larger family of symmetries\cite{lou1993twelve}. 

\subsection{Solitons}
Sawada and Kotera constructed explicit soliton solutions via Hirota's method\cite{sawada1974method}, with further contributions from Satsuma and Kaup using B\"acklund transformations\cite{satsuma1977backlund}.
Wazwaz\cite{wazwaz2008n} and Alam et al. \cite{alam2021determination} applied symbolic computation methods.
Soliton interactions remain elastic and consistent with integrability.
Ma et al.\cite{ma2023phase} reported transitions to breather and soliton molecule structures through the Hirota bilinear method. 

\subsection{Periodic wave solutions}
Periodic (cnoidal) wave solutions were derived using elliptic functions and finite-gap methods.
Tian \&  Zhang\cite{tian2010riemann} gave Riemann theta function solutions, while  Ram\'{\i}rez et al. obtained closed-form periodic solutions through symbolic computations \cite{ramirez2016reductions}. Jun \&  Ji obtained the elliptic solutions via linear superposition approach \cite{li2004different}.
These results confirm the existence of closed-form periodic solutions, which can be interpreted as a continuous family bridging solitons and linear waves.

\subsection{Well-posedness}
Kaup outlined the inverse scattering transform (IST) solution of the CDG--SK equation\cite{Kaup1980}.
Smooth decaying initial data leads to global well-posedness due to integrability.
 Dye and Parker\cite{dye2001bidirectional} studied bidirectional variants of the equation.
While detailed modern Sobolev-space well-posedness results are limited, energy methods and integrability suggest local and global existence for smooth data.

\subsection{Stability results for fifth-order models.}
The stability and instability of periodic, solitary, and compacton solutions are investigated for dispersive fifth-order water wave models within the KdV family. Particular emphasis is placed on the rigorous analysis of solitary wave stability across various members of this fifth-order KdV family. Notably, a numerical framework is employed to examine the stability and instability of solitary waves in the fifth-order KdV equation \cite{bridges2002stability}. Orbital stability methods for Hamiltonian systems are used to analyze a fifth-order evolutionary model proposed by Il'ichev, and Semenov \cite{il1992stability}, while Tan et al.\ \cite{tan2002semi} explore the stability of embedded solitons in the Hamiltonian fifth-order KdV equation. Esfahani and Levandosky \cite{esfahani2021existence} investigate the stability of a fifth-order model using a variational approach. Additionally, Natali \cite{natali2010note} studies the nonlinear stability of both the Kawahara–KdV and modified Kawahara–KdV equations. The stability of compactons is studied by  Dey \& Khare for dispersive $ K(m,n,p)$ type equations \cite{dey2000stability}.


One of the classical higher-dispersion analogs of the KdV equation, the fifth-order dispersive model known as the Kawahara equation, serves as a prototype for rigorous studies on linear (spectral), nonlinear, modulational, high-frequency, orbital, numerical, and analytical stability and instability of periodic, solitary, and transient waves of small or arbitrary amplitude. For instance, the Evans function approach was employed by Bridges \& Derks to analyze the linear stability of solitary waves in the Kawahara model \cite{bridges2002linear}; Haragus et al.\ studied the spectral stability of spatially periodic waves \cite{haragus2006spectral}; and the orbital stability of solitary waves was investigated by Kabakouala \& Molinet \cite{kabakouala2018stability}. Andrade et al.\ examined the orbital stability of periodic traveling waves \cite{de2017orbital}, while high-frequency instabilities were explored via perturbative methods in \cite{trichtchenko2018stability, creedon2021high}. The nonlinear stability and instability of Kawahara-type fifth-order models were rigorously studied by Quintero \& Mu\~{n}oz \cite{quintero2016analytic}. Recently, Creedon demonstrated that the Kawahara model admits exotic Wilton ripples \cite{creedon2024existence}.

Deconinck and Kapitula\cite{kapitula2015spectral, DK} proved the orbital stability of KdV's periodic waves, motivating similar expectations for CDG--SK.
Though no explicit stability analysis exists for periodic CDG--SK waves, spectral stability of solitons has been rigorously demonstrated by Wang\cite{wang2023spectral}.
The integrable structure and conservation laws suggest spectral and orbital stability for periodic traveling waves, although further investigation is needed. Motivated by the existence of exotic waves and stability and instability phenomena in fifth-order models and their hidden integrability structures, this article establishes the existence of periodic traveling wave solutions of \eqref{eq:CDGSK}, derives their small-amplitude asymptotics and investigates their spectral stability.

The structure of the article is as follows. In the next section, we discuss the existence of periodic traveling waves, followed by the proof of the main theorem. Additional details and proofs are provided in the Appendix. We state main theorems of this article below.

\begin{theorem}[Existence of small periodic traveling waves]\label{thm:existence}
Fix a wavenumber $k>0$. There exists $\varepsilon_0>0$ such that for each $0<|a|<\varepsilon_0$, there is a $2\pi/k$-periodic traveling wave solution of \eqref{eq:CDGSK} of small amplitude $a$. Specifically, there exists a $2\pi$-periodic function $w(z)$ and a speed $c$ of the form 
\[
    w(z) = a\,w_1(z) + a^2\,w_2(z) + a^3\,w_3(z) + O(a^4), \qquad
    c = c_0 + c_2\,a^2 + O(a^4),
\] 
with $c_0 = k^4$, and constants $c_2\in \mathbb{R}$, such that $u(x,t) = w(k(x-ct))$ is a solution of \eqref{eq:CDGSK}. In particular, one can choose the expansion so that $w(z)$ is an even $2\pi$-periodic function with zero mean. The leading profile and speed coefficients are 
\[
    w_1(z) = \cos(z), \quad  w_2(z) = -\frac{15}{2k^2} + \frac{1}{2k^2}\cos(2z),\,\quad w_3(z)=\frac{3}{16k^4}\cos(3z),
\] 
\[
\quad c_0 = k^4, \quad c_2 = 105\,.
\]
Consequently, as $a\to0$, $w(z)$ converges (in $C^\infty$) to $0$ and $c$ converges to $k^4$. Furthermore, the map $a \mapsto (w(\cdot;a), c(a))$ can be chosen to be smooth (in fact, real-analytic) for $|a|$ sufficiently small.
\end{theorem}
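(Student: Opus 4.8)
\emph{Proof plan.} I would prove this by bifurcation from a simple eigenvalue. Substituting $u(x,t)=w(z)$ with $z=k(x-ct)$ into \eqref{eq:CDGSK} gives the autonomous fifth-order ODE $k^{5}w^{(5)}-ck\,w'+15k\bigl(k^{2}ww''+w^{3}\bigr)'=0$; since every term is an exact $z$-derivative, integrating once and dividing by $k$ yields
\[
  k^{4}w^{(4)}-c\,w+15k^{2}ww''+15w^{3}=B,
\]
where $B\in\R$ is an integration constant, which I fix to $B=0$ (the stated asymptotic coefficients correspond to this choice; a different normalization of $B$ only alters the mean values of $w_2$ and the higher profiles). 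I look for $2\pi$-periodic \emph{even} solutions, the evenness removing the translational degeneracy, and set $F(w,c):=k^{4}w^{(4)}-c\,w+15k^{2}ww''+15w^{3}$, viewed as a real-analytic map $F\colon H^{4}_{\mathrm{even}}(0,2\pi)\times\R\to L^{2}_{\mathrm{even}}(0,2\pi)$ (it is polynomial in $w,w'',w^{(4)}$ and affine in $c$). Note $F(0,c)=0$ for every $c$, so $w\equiv0$ is a trivial branch.

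Linearizing, $L_{c}:=D_{w}F(0,c)=k^{4}\partial_{z}^{4}-c$ acts on the Fourier mode $e^{inz}$ by $k^{4}n^{4}-c$, so on even functions $L_{c}$ is invertible unless $c=k^{4}n^{4}$ for some integer $n\ge1$. The bifurcation value at which a mode of minimal period $2\pi$ enters the kernel is $c_{0}=k^{4}$ ($n=1$), and since $n^{4}=1$ only for $n=\pm1$ we get $\ker L_{c_{0}}=\operatorname{span}\{\cos z\}$, one-dimensional. The nonlinear terms $15k^{2}ww''+15w^{3}$ map bounded sets of $H^{4}$ into bounded sets of $H^{2}$, so $F(\cdot,c)$ is a compact perturbation of $L_{c}$; hence $L_{c_{0}}$ is Fredholm of index zero, with range the codimension-one subspace of $L^{2}_{\mathrm{even}}$ that is $L^{2}$-orthogonal to $\cos z$. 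Finally $\partial_{c}L_{c}=-\mathrm{Id}$, so $\partial_{c}L_{c}\big|_{c_{0}}\cos z=-\cos z\notin\operatorname{Range}(L_{c_{0}})$, i.e.\ the eigenvalue crosses transversally.

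These are exactly the hypotheses of the analytic Crandall--Rabinowitz bifurcation theorem, which produces a locally unique, real-analytic branch $a\mapsto\bigl(w(\cdot;a),c(a)\bigr)$ of nontrivial even $2\pi$-periodic solutions through $(0,c_{0})$ with $w(z;a)=a\cos z+o(a)$ and $c(0)=c_{0}=k^{4}$; equivalently one runs a Lyapunov--Schmidt reduction by hand, solving the projection of $F=0$ onto $(\ker L_{c_{0}})^{\perp}$ for the higher harmonics by the implicit function theorem and the remaining scalar equation for $c=c(a)$. Normalizing by the first cosine coefficient forces $w_{1}=\cos z$. Because the ODE is autonomous, the half-period shift $z\mapsto z+\pi$ sends the branch solution with parameter $a$ to an even $2\pi$-periodic solution whose first cosine coefficient is $-a$; by local uniqueness this is the branch solution with parameter $-a$, and a shift does not change the speed, so $c(-a)=c(a)$: the speed is even in $a$, which accounts for the absence of odd powers in its expansion. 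The remaining coefficients then follow by inserting $w=aw_{1}+a^{2}w_{2}+a^{3}w_{3}+\cdots$ and $c=c_{0}+c_{2}a^{2}+\cdots$ into the ODE and matching powers of $a$: at order $a^{2}$, $15k^{2}w_{1}w_{1}''=-\tfrac{15k^{2}}{2}(1+\cos 2z)$ has no $\cos z$-component, so the order-$a^{2}$ equation is solvable and determines $w_{2}$; at order $a^{3}$ the solvability condition (vanishing of the $\cos z$-component of the cubic and cross terms) gives $c_{2}=105$ and then $w_{3}=\tfrac{3}{16k^{4}}\cos 3z$; higher-order remainders are $O(a^{4})$ by analyticity of the branch, and since solutions of the ODE are smooth and $\|w(\cdot;a)\|_{H^{s}}=O(a)$ for every $s$, one gets $w(\cdot;a)\to0$ in $C^{\infty}$ and $c(a)\to k^{4}$ as $a\to0$.

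The one step that requires genuine care is organizing the functional framework so that $w\equiv0$ is a zero of $F(\cdot,c)$ for \emph{every} $c$ and so that the linearized kernel is exactly one-dimensional: fixing the integration constant and restricting to even functions accomplish both. Once this is arranged there is no real analytic obstacle — \eqref{eq:CDGSK} reduces here to a well-behaved semilinear fourth-order ODE, and the spectral and transversality conditions are verified by direct Fourier computation — so the remaining work is the elementary but somewhat tedious perturbative bookkeeping that produces the values $c_{2}=105$, $w_{2}$, and $w_{3}$.
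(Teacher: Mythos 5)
Your proposal is correct and follows essentially the same route as the paper: integrate once with zero constant to get the fourth-order profile equation, restrict to even $2\pi$-periodic functions so that the kernel of $k^4\partial_z^4-c_0$ at $c_0=k^4$ is spanned by $\cos z$ alone, and perform a Lyapunov--Schmidt reduction (your Crandall--Rabinowitz formulation is just the packaged version of the paper's explicit $P$/$Q$ projection argument), then match powers of $a$ to obtain $w_2$, $c_2=105$, and $w_3$, all of which agree with the paper's computations. The only added value beyond the paper's write-up is your half-period-shift argument for $c(-a)=c(a)$, which the paper merely asserts via symmetry.
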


\begin{theorem}[Spectral stability]\label{thm:spec} 
    For sufficiently small $|a|$, periodic traveling waves $u(x,t) = w(k(x-ct))$ of \eqref{eq:CDGSK} given in Theorem~\ref{thm:existence} are spectrally stable with respect to localized ($L^2(\mathbb{R})$) or co-periodic perturbations ($L^2(\mathbb{T})$).  
\end{theorem}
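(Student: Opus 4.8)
\emph{Proof strategy.}
I would linearize \eqref{eq:CDGSK} about the wave in the co‑moving frame and study the resulting spectral problem by Bloch--Floquet decomposition together with analytic perturbation theory in the amplitude $a$. Writing $u(x,t)=w(z)+v(z,t)$ with $z=k(x-ct)$ and dropping the quadratic terms gives $v_t=\mathcal A(a)v$, where
\[
    \mathcal A(a)=\partial_z\Big[ck-k^5\partial_z^4-15k^3\big(w\,\partial_z^2+w_{zz}\big)-45k\,w^2\Big]
\]
has $2\pi$‑periodic coefficients and depends analytically on $a$ through the expansions of $w$ and $c$ in Theorem~\ref{thm:existence}. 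After the decomposition $L^2(\mathbb R)\simeq\int^{\oplus}_{[-1/2,1/2]}L^2(\mathbb T)\,d\mu$, spectral stability on $L^2(\mathbb R)$ is equivalent to $\sigma\big(\mathcal A_\mu(a)\big)\subset i\mathbb R$ for every $\mu$, where $\mathcal A_\mu(a)=e^{-i\mu z}\mathcal A(a)e^{i\mu z}$ acts on $L^2(\mathbb T)$; the co‑periodic case is $\mu=0$. At $a=0$ the profile vanishes, $c=k^4$, and $\mathcal A_\mu(0)$ is constant‑coefficient with spectrum $\{\lambda_n(\mu)=ik^5 f(n+\mu):n\in\mathbb Z\}\subset i\mathbb R$, $f(\zeta)=\zeta-\zeta^5$.

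The first step is to locate all eigenvalue collisions of the unperturbed family. A collision $\lambda_n(\mu)=\lambda_m(\mu)$ with $j:=n-m\neq0$ forces, for $\zeta:=m+\mu$, the equation $(\zeta+j)^5-\zeta^5=j$; the left‑hand side is a quartic in $\zeta$ with a single critical point, a minimum of value $|j|(j^4-16)/16$, so real solutions exist only for $|j|\le 2$, and then a direct check shows they occur only at $\zeta\in\{0,-1\}$, i.e.\ $\mu=0$ and $\{n,m\}\subset\{-1,0,1\}$. Thus the \emph{only} collision in the whole family is the triple collision at $\lambda=0,\ \mu=0$. I would also record two symmetries. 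Since $w$ is even and $\mathcal A(a)$ has real coefficients, $\mathcal A(a)$ anticommutes with the reflection $z\mapsto-z$, and together with complex conjugation this forces $\sigma(\mathcal A_\mu(a))$ to be invariant under $\lambda\mapsto-\bar\lambda$ for every $\mu$ and every $a$; this is the structural symmetry we exploit. Moreover, by Theorem~\ref{thm:existence} the amplitude‑$(-a)$ wave is the amplitude‑$a$ wave translated by a half period, so $\mathcal A_\mu(-a)$ is similar to $\mathcal A_\mu(a)$ and every eigenvalue is even in $a$.

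Away from the collision the argument is routine. For $|n|$ large the perturbation $\mathcal A_\mu(a)-\mathcal A_\mu(0)$ is of strictly lower differential order than $\mathcal A_\mu(0)$, and for $(n,\mu)$ outside a fixed neighbourhood of the triple collision the eigenvalues of $\mathcal A_\mu(0)$ are simple with uniformly positive spectral gaps; analytic perturbation theory then keeps each perturbed eigenvalue simple and $O(|a|)$‑close to $i\mathbb R$, and the symmetry $\lambda\mapsto-\bar\lambda$ forces a simple eigenvalue near a point of $i\mathbb R$ to lie \emph{on} $i\mathbb R$. Hence, for $|a|$ small, the spectrum is purely imaginary away from the origin, uniformly in $\mu$.

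The whole difficulty is therefore concentrated at $\lambda=0,\ \mu=0$. Here I would use the Riesz projection onto the $3$‑dimensional spectral subspace (well separated from the rest of the spectrum, which stays near $\mp 30ik^5$) to reduce $\mathcal A_\mu(a)$ to an analytic $3\times3$ matrix $\mathbf B(a,\mu)$ with $\mathbf B(0,\mu)=ik^5\,\mathrm{diag}\big(f(-1+\mu),f(\mu),f(1+\mu)\big)$. The symmetry $\lambda\mapsto-\bar\lambda$ forces $\det(\lambda I-\mathbf B)=\lambda^3+i\alpha\lambda^2+\rho\lambda+i\beta$ with $\alpha,\rho,\beta$ real‑valued and even in $a$, so the substitution $\lambda=i\nu$ turns this into the real cubic $\nu^3+\alpha\nu^2-\rho\nu-\beta=0$; the three Bloch eigenvalues near the origin are purely imaginary precisely when this cubic has three real roots, i.e.\ when its discriminant is $\ge 0$. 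One then computes $\alpha,\rho,\beta$ to the needed order via second‑order spectral perturbation theory for the cluster, using $w=a\cos z+a^2w_2+a^3w_3+O(a^4)$ and $c=k^4+105\,a^2+O(a^4)$ — the dominant contribution entering at $O(a^2)$, through $w_2$ and through the coupling of the cluster modes $e^{\pm iz}$ to $e^{\pm2iz}$ — together with the fact that $\lambda=0$ is always an eigenvalue at $\mu=0$ (eigenfunction $w_z$, so $\beta(a,0)\equiv0$) and that evenness in $a$ annihilates the odd‑order terms that would otherwise permit the colliding eigenvalues to leave the axis; finally one checks that the resulting discriminant stays $\ge 0$ throughout a neighbourhood of $(a,\mu)=(0,0)$. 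I expect this last sign verification — organizing the two small parameters $a$ and $\mu$ and controlling the leading terms of the discriminant — to be the main obstacle; the remainder is perturbation theory together with the two symmetries.
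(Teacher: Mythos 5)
Your strategy coincides with the paper's proof in every structural respect: the same linearization and Bloch--Floquet decomposition, the same conclusion that the only collision of unperturbed eigenvalues is the triple collision at $\lambda=0$, $\xi=0$ involving the modes $n\in\{-1,0,1\}$ (your quartic argument for this is a correct, slightly more explicit version of the paper's monotonicity remark), the same fixed-$\xi$ symmetry keeping simple eigenvalues on $i\mathbb{R}$, and the same reduction, via the Riesz projection onto the three-dimensional spectral subspace near the origin, to asking whether a cubic with the substitution $\lambda=i\nu$ has three real roots, i.e.\ nonnegative discriminant. Your side remarks (evenness of the spectrum in $a$ via the half-period shift, and $\lambda=0$ being an eigenvalue at $\xi=0$ with eigenfunction $w_z$) are correct and consistent with the paper's computation.

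There is, however, a genuine gap: you stop precisely at the step that decides the theorem. The symmetries you invoke only make the reduced cubic real; they do not force its discriminant to be nonnegative, and for related fifth-order models (Kawahara-type equations with high-frequency instabilities) the analogous quantity does change sign, so the conclusion cannot be inferred from structure alone --- it must be computed, and you explicitly defer this ``sign verification.'' The paper carries it out: using Kato's transformation operators (Lemma~\ref{l:31}) it builds a basis of $V_{a,\xi}$ to second order in $(a,\xi)$ (Lemma~\ref{lem:eigenfunctions}, proved in Appendix~\ref{app:eig}), expands $\mathcal{T}_{a,\xi}=\mathcal{T}_{0,\xi}+a\,\mathcal{T}_1+a^2\,\mathcal{T}_2+O(a^3)$, computes the $3\times3$ matrix $\mathbb{B}_{a,\xi}$ and its characteristic polynomial, and finds that the discriminant of the associated real cubic is $15625\,k^{12}\xi^6\bigl(48\,a^2k^8+16\,k^{12}\xi^2+\text{higher order}\bigr)$, whose leading factor $16k^8\bigl(3a^2+k^4\xi^2\bigr)$ is positive; hence the three roots are real and the origin cluster stays on the imaginary axis. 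Until you produce and sign-check the analogous second-order expansion of your matrix $\mathbf{B}(a,\mu)$ --- which is exactly where $w_2$, $c_2=105$, and the coupling of $e^{\pm iz}$ to $e^{\pm 2iz}$ enter --- your argument shows only that any instability would have to emerge from that cluster, not that none does.
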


\subsection*{Notations}\label{sec:notations}
Through out the article, we have used the following notations. 
Here, $L^{2}(\mathbb{R})$ is the set of Lebesgue measurable, real or complex-valued functions over $\mathbb{R}$ such that
$$
\|f\|_{L^{2}(\mathbb{R})}=\left(\int_{\mathbb{R}}|f(x)|^{2} d x\right)^{1 / 2}<+\infty,
$$
and, $L^{2}(\mathbb{T})$ denote the space of $2 \pi$-periodic, measurable, real or complex-valued functions over $\mathbb{R}$ such that
$$
\|f\|_{L^{2}(\mathbb{T})}=\left(\frac{1}{2 \pi} \int_{0}^{2 \pi}|f(x)|^{2} d x\right)^{1 / 2}<+\infty.
 $$
For $s \in \mathbb{R}$, let $H^{s}(\mathbb{R})$ consists of tempered distributions such that
$$
\|f\|_{H^{s}(\mathbb{R})}=\left(\int_{\mathbb{R}}\left(1+|t|^{2}\right)^{s}|\hat{f}(t)|^{2} d t\right)^{\frac{1}{2}}<+\infty,
$$
and
$$
H^{s}(\mathbb{T})=\left\{f \in H^{s}(\mathbb{R}): f \text { is } 2 \pi \text {-periodic }\right\}.
$$
We define $L^{2}(\mathbb{T})$-inner product as
\begin{equation}\label{eq:inn}
\langle f, g\rangle=\frac{1}{ \pi} \int_{0}^{2 \pi} f(z) \bar{g}(z) d z=\sum_{n \in \mathbf{Z}} \hat{f}_{n} \overline{\hat{g}_n},
\end{equation}
where $\widehat{f}_{n}$ are Fourier coefficients of the function $f$ defined by
$$
\widehat{f}_{n}=\frac{1}{2 \pi} \int_{0}^{2 \pi} f(z) e^{i n z} d z.
$$
Throughout the article, $\Re(\mu)$ represents the real part of $\mu\in\mathbb{C}$.

\section{Proof of 
Theorem~\ref{thm:existence}}\label{sec:existence}
We look for a periodic traveling wave solution of \eqref{eq:CDGSK} and therefore, set $u(x,t)=w(z)$, where $w$ is a $2\pi$-periodic function of its argument, and $z=k(x-c t)$ with $c>0$ being the speed of the traveling wave. We obtain a fifth order ODE in $w$ given by
\begin{equation}\label{eq:ODE5}
    -c\,k\,w' +k^5\,w^{(5)} + 15\,k^3\,w\,w^{(3)} + 15\,k^3\,w'\,w'' + 45\,k\,w^2 w' = 0,
\end{equation}
where $w^{(n)}$ denotes the $n$-th derivative with respect to $z$. 
We integrate it once with respect to $z$ and assume the integration constant is zero (this corresponds to selecting the mean of $w$ appropriately for periodic solutions). This yields a fourth-order ODE
\begin{equation}\label{eq:ODE4}
    k^4\,w^{(4)} - c\,w + 15\,k^2\,w\,w'' + 15w^3 = 0,
\end{equation}
which $w(z)$ must satisfy for a $2\pi$-periodic traveling wave.

\subsection*{Linearization and kernel decomposition}
We first analyze the linearization
\begin{equation}\label{eq:linear}
    L_c h:=k^4(\partial_z^4 - c)h = 0.
\end{equation}
of \eqref{eq:ODE4} at the trivial solution $w=0$. We seek $2\pi$-periodic solutions to \eqref{eq:linear}. Using the trial solution $h(z)=e^{i m  z}$ (with integer $m$), we obtain the dispersion relation $-(c) + (m k)^4 = 0$, i.e.\ $c = (m k)^4$. The smallest positive $m$ giving a nontrivial solution is $m=1$. Thus the \emph{critical} (smallest) phase speed for which \eqref{eq:linear} admits a nontrivial periodic solution is 
\[ c_0 = (1\cdot k)^4 = k^4. \] 
At $c=c_0$, the linear operator $L_{c_0}$ has a nontrivial kernel spanned by $\cos(z)$ and $\sin(z)$. (Equivalently, $e^{\pm i z}$ are neutral eigenmodes.) No other harmonics resonate at this parameter value since for $|m|\neq 1$, $(m k)^4 - k^4 \neq 0$. We also note that for $c=c_0$, the constant (zero-frequency) mode is not in the kernel because $L_{c_0}1 = -k^4 \neq 0$.

The two-dimensional kernel at $c=c_0$ reflects two symmetries of the problem: a phase translation symmetry (shifting $z$) and a spatial reflection symmetry ($z \mapsto -z$). The translation symmetry implies that if $w(z)$ is a solution, then so is $w(z+\Delta)$ for any constant $\Delta$, which leads to the $\sin(z)$ mode (since $\sin(z)$ is the derivative of $\cos(z)$ up to scaling). The reflection (even/odd) symmetry implies we can choose solutions that are either even or odd. We will select the solution to be even in $z$ for definiteness. This breaks the translational degeneracy by fixing the phase so that $w'(z)$ has zero mean and is $\pi$-antiperiodic, eliminating the $\sin(z)$ component. In practice, we impose that $w(z)$ is an even function, which means we take $w_1(z) = \cos(z)$ as the normalized fundamental mode. 

Now we formulate the problem as a bifurcation equation. We treat $c$ as an unknown parameter that must be adjusted to find nontrivial $w$. For a fixed $k>0$, we write \eqref{eq:ODE4} as $F(w,c)=0$ for the function
\[ 
    F(w,c) := k^4\,w^{(4)} - c\,w + 15\,k^2\,w\,w'' + 15w^3\,,
\] 
defined on the space $H^4_{\text{even}}(\mathbb{T})\times \mathbb{R}^+$ where $H^4_{\text{even}}(\mathbb{T})$ is the subspace of $H^4(\mathbb{T})$ consisting of even functions. We seek a solution $(w,c)$ near $(0,c_0)$. The linearized operator at $(0,c_0)$ is $D_w F(0,c_0)h =L_{c_0}h= k^4\,h^{(4)} - c_0\,h$. As discussed, $\ker(L_{c_0}) = \text{span}\{\cos(z)\}$ in $H^4_{\text{even}}(\mathbb{T})$. We decompose the function space as 
\[ 
H^4_{\text{even}}(\mathbb{T}) = \ker(L_{c_0}) \oplus \mathrm{Range}(L_{c_0}),
\] 
i.e.\ any $2\pi$-periodic even function $w(z)$ can be uniquely written as 
\[ 
    w(z) = a\,\cos(z) + h(z),
\] 
with $a\in\mathbb{R}$ and $h \in \mathrm{Range}(L_0)$ such that $\langle h,\cos(z)\rangle_{L^2}=0$. Here $a$ will serve as our small amplitude parameter. We also define the projection $P$ onto $\ker L_{c_0}$ along $\mathrm{Range}(L_{c_0})$ by 
\[ 
    P[w] = \Big(\frac{1}{\pi}\int_{0}^{2\pi}w(z)\cos(z)\,dz\Big)\,w(z),
\] 
and $Q = I - P$ as the complementary projection onto $\mathrm{Range}(L_{c_0})$. In particular, $P[\cos(z)] = \cos(z)$ and $Q[\cos(z)]=0$.

Applying these projections to the equation $F(w,c)=0$ yields two equations:
\begin{align}
    P[F(a\cos(z) + h,\,c)] &= 0, \label{eq:LS1}\\
    Q[F(a\cos(z) + h,\,c)] &= 0. \label{eq:LS2}
\end{align}
Equation \eqref{eq:LS1} is the bifurcation equation (the solvability condition) in the one-dimensional kernel, and \eqref{eq:LS2} is an equation in the range which can be solved for $h$ given $a$ and $c$. We will solve \eqref{eq:LS2} by the implicit function theorem for $h = h(a,c)$, and then solve \eqref{eq:LS1} for $a$ and $c$.

\subsection*{Solving the range equation}
For $(w,c)$ near $(0,c_0)$, the linear operator $L_{c_0}$ is invertible on $\mathrm{Range}(L_{c_0})$ (since $L_{c_0} h = 0$ implies $h\in\ker L_{c_0}$ which is orthogonal to the range). Moreover, by standard Fourier series theory, this inverse is bounded on the space of $2\pi$-periodic functions (excluding the kernel). Thus we can solve the $Q$-equation \eqref{eq:LS2} for $h$ as a smooth function of $(a,c)$ near $(0,c_0)$. In fact, by the implicit function theorem in Banach spaces, there exists a smooth mapping $h = H(a,c)$ with $H(0,c_0)=0$ and $\nabla_{a,c}H(0,c_0)=0$, such that for all sufficiently small $a$ and $c$,
\[ 
    Q[F(a\cos(z) + H(a,c),\,c)] = 0.
\] 
Substituting $h = H(a,c)$ back into \eqref{eq:LS1}, we obtain a scalar equation in the unknowns $a$ and $c$:
\begin{equation}\label{eq:red-eq}
    P[F\!\big(a\cos(z) + H(a,c),\,c\big)] = 0.
\end{equation}
This is the bifurcation equation (or reduced equation) governing the small solutions. It can be expanded in powers of $a$ (which is small) to determine nontrivial solutions perturbatively. Any solution $(a,c)$ of \eqref{eq:red-eq} with $a\neq0$ yields a solution $w = a\cos(z) + H(a,c)$ of the full problem.

\subsection*{Asymptotic expansion}
To solve \eqref{eq:red-eq}, we expand $H(a,c)$ and $c$ as power series in the small parameter $a$. We write 
\[ 
    c = c_0 + c_1 a + c_2 a^2 + c_3 a^3 + O(a^4),
\] 
and similarly expand the correction $H(a,c) = h_1 a + h_2 a^2 + h_3 a^3 + O(a^4)$, where each $h_j(z)$ is a $2\pi$-periodic even function orthogonal to $\cos(z)$. We substitute these expansions into \eqref{eq:red-eq} and collect terms by powers of $a$. Each order will yield an equation that can be solved for the unknown coefficients $c_j$ and $h_j$.

At $O(a^1)$, The $a$-linear terms in \eqref{eq:red-eq} give 
\[ 
    P\big[-k^4 \cos(z)-k^4 h_1+k^4 (\cos(z)+h_1^{(4)})\big] = 0,
\] 
which leads to $P\big[h_1-h_1^{(4)}\big]=0$. We take $h_1=0$ as one of the solutions of this equations. 

At $O(a^2)$, The $a^2$-terms in \eqref{eq:red-eq} yield
\[
    P\Big[-k^4(h_2-h_2^{(4)})-c_1 \cos(z)-15k^2 \cos^2(z)\Big] = 0.
\]
We set $-k^4(h_2-h_2^{(4)})-c_1 \cos(z)-15k^2 \cos^2(z)=0$ to find that $c_1=0$ and $k^2(h_2^{(4)}-h_2)=15 \cos^2(z)$ which yields 
\[
    h_2(z) = -\frac{15}{2k^2} + \frac{1}{2k^2}\cos(2z)\,. 
\] 
At $O(a^3)$, The $a^3$ terms in \eqref{eq:red-eq} produce 
\[
    P\Big[\,k^4(h_3^{(4)}-h_3)-c_2\cos(z)+15k^2(h_2^{(2)}-h_2)\cos(z)+15\cos^3(z)\,\Big] = 0.
\] 
We set coefficient of $\cos(z)$ equal to zero to obtain
\[
c_2=105
\]
and $\cos(3z)$ equal to zero to obtain
\[
h_3(z)=\frac{3}{16k^4}\cos(3z).
\]
We have thus determined all unknowns up to third order.

\subsection*{Higher-order terms and smoothness}
The process described above can be continued to higher orders $O(a^4)$, $O(a^5)$, etc., in principle yielding all coefficients $\{c_j, w_j(z)\}$ recursively. In practice, the algebra grows increasingly complicated. However, one can make several general observations. First, all odd-order terms $c_{2n+1}$ will turn out to vanish ($c_1=0$, and one can show similarly $c_3=0$ by examining the $O(a^4)$ equations, due to symmetry and the structure of resonances). Thus $c(a)$ is an even function of $a$, consistent with the reversibility (reflection symmetry) of the ODE \eqref{eq:ODE4}. Second, at each order $n\ge 2$, the solvability condition (projection onto $\cos(z)$) will determine $c_n$ in terms of lower-order quantities, ensuring a unique power series solution for $c$. This means the small-amplitude branch of solutions is uniquely determined (up to the overall phase choice) by the amplitude parameter $a$. Finally, from \eqref{eq:ODE4}, we see that
\[
k^4 w^{(4)}=c w-15(k^2 w w''+w^3)
\]
and therefore, since $w\in H^4(\mathbb{T})$, $w^{(4)}\in H^2(\mathbb{T})$ and in turn, $w\in H^6(\mathbb{T})$. By a bootstrapping argument, we obtain that $w$ is smooth. (For a detailed general proof of existence using Lyapunov--Schmidt reduction and analytic implicit function theorem, see  \cite{IoossKirch1990}.)

\section{Proof of Theorem~\ref{thm:spec}}\label{sec:spec}
In this section, we study the spectral stability of periodic traveling waves of \eqref{eq:CDGSK} obtained in Thoreem~\ref{thm:existence} and provide a proof of Theorem~\ref{thm:spec}. We seek a perturbed solution $u(x,t)=w(z)+v(z,t)$, $z=k(x-ct)$ of \eqref{eq:CDGSK} where $v(z,t)$ is a small perturbation and arrive at the linearized equation
\begin{equation}
    v_t-ckv_{z}+k^5 v_{zzzzz}+15k(k^2(wv_{zz}+vw_{zz})+3w^2v)_{z}=0.
\end{equation}
We seek a solution of the form $v(z,t) = e^{{k\lambda}t} \tilde{v}(z), \ \lambda \in \mathbb{C},$ to obtain
\begin{equation}\label{eq:spec}
\mathcal{T}_{a} \tilde{v} :=  \lambda \tilde{v},
\end{equation}
where
\begin{equation}\label{eq:Ta}
\mathcal{T}_{a}  :=  \partial_z \left( c - k^4 \partial_z^4 - 15 \left(k^2 \left( w \partial_z^2 + w_{zz} \right) + 3 w^2 \right)\right).
\end{equation}

The operator $\mathcal{T}_{a}$ is defined on $L^2(\mathbb{R})$ with dense domain $H^5(\mathbb{R})$. We define the spectral stability of the periodic traveling wave solution $w$ as follows:

\begin{definition}\label{def:stab}
    The solution $u(x,t)=w(z)$, $z=k(x-ct)$ of \eqref{eq:CDGSK} obtained in Theorem~\ref{thm:existence}  is spectrally stable if $L^2(\mathbb{R})$-spectrum of $\mathcal{T}_{a}$ consists of $\lambda \in \mathbb{C}$ with $\Re(\lambda) \leq 0$, otherwise, it is deemed to be spectrally unstable.
\end{definition}

Since \(\mathcal{T}_{a}\) is a real operator, if \(\lambda\) is an eigenvalue with eigenfunction \(\tilde{v}(z)\), then \(\overline{\lambda}\) is also an eigenvalue with eigenfunction \(\overline{\tilde{v}(z)}\).  The operator \(\mathcal{T}_{a}\) is reversible under the transformation \(z \to -z\). Specifically, if \(\tilde{v}(z)\) is an eigenfunction with eigenvalue \(\lambda\), then \(\tilde{v}(-z)\) is also an eigenfunction with eigenvalue \(-\lambda\). Combining, if $\lambda$ is in the spectrum of \(\mathcal{T}_{a}\) then so are $-\lambda, \overline{\lambda}$, and $-\overline{\lambda}$.  Therefore, the spectrum is symmetric with respect to both real and imaginary axes. Following Definition~\ref{def:stab}, the solution is unstable if there is any $\lambda$ in spectrum off imaginary axis.

The operator $\mathcal{T}_{a}$ has periodic coefficients and therefore, can't have decaying eigenfunctions in $L^2(\mathbb{R})$. Hence, $\mathcal{T}_{a}$ has continuous spectrum in $L^2(\mathbb{R})$. Using Bloch Transform, for every $\tilde{v}\in L^2(\mathbb{R})$ there exists a unique family, $v_\xi\in L^2(\mathbb{T})$, $\xi\in (-1/2,1/2]$ (called Floquet exponent), such that
\[
\tilde{v}(z)=\int_{-1/2}^{1/2} v_\xi(z) e^{i\xi z}~d\xi.
\]
Using this, we obtain that
\begin{equation*}
\text{spec}_{L^2(\mathbb{R})}(\mathcal{T}_{a})=
\bigcup_{\xi\in(-1/2,1/2]}\text{spec}_{L^2(\mathbb{T})}(\mathcal{T}_{a,\xi}),
\end{equation*}
where
\begin{equation}\label{E:Lxi}
\mathcal{T}_{a,\xi}:=e^{-i\xi z}\mathcal{T}_{a} e^{i\xi z}
\end{equation}
is defined on $L^2(\mathbb{T})$ and $L^2(\mathbb{T})$-spectrum of $\L_\xi$ comprises of discrete eigenvalues of finite multiplicities.

A straightforward calculation shows that for trivial solution ($w\equiv 0$), we have
\begin{equation}\label{eq:T0xi}
    \mathcal{T}_{0,\xi}  = k^4 (\partial_z+i\xi)(1-(\partial_z+i\xi)^4)
\end{equation}
and its eigenvalues are given by
\begin{equation}\label{eq:T0ei}
    \mathcal{T}_{0,\xi} e^{inz} = i \omega_{n,\xi} e^{inz}, \quad n \in \mathbb{Z},
\end{equation}
where
\begin{equation}
    \omega_{n,\xi} = k^4 (n + \xi) (1-({n + \xi})^4).
\end{equation}
We have $\sigma (\mathcal{A}_{0,\xi}) \subset i \mathbb{R}$ which should be the case since $a = 0$ corresponds to the zero solution. 

Notice that 
\[
||\mathcal{T}_{a,\xi}-\mathcal{T}_{0,\xi}||_{L^2(\mathbb{T})\to L^2(\mathbb{T})}\to 0 \; \text{as} \;|a|\to 0
\]
and therefore, for $|a|\ll 1$, eigenvalues of $\mathcal{T}_{a,\xi}$ continuously bifurcate from that $\mathcal{T}_{0,\xi}$. Observe that because of quad-fold symmetry of the spectrum, eigenvalues $i\omega_{n,\xi}$ of $\mathcal{T}_{0,\xi}$ leave imaginary axis for small $|a|$ only when they collide on imaginary axis for some $\xi$.

To observe the collision among eigenvalues $i\omega_{n,\xi}$, let \(y = n + \xi\). Since \(n\) is an integer and \(\xi \in \left(-\frac{1}{2}, \frac{1}{2}\right]\), the value of \(y\) lies in the interval
\[
y \in \left(n - \frac{1}{2}, n + \frac{1}{2}\right].
\]
The equation becomes:
\[
\omega_{n,\xi} = k^4 y \left(1 - y^4\right).
\]
If \(\xi = 0\), then \(x = n\), and the equation simplifies to
\[
\omega_{n,0} = k^4 n \left(1 - n^4\right).
\]
For \(\omega_{n,0} = \omega_{m,0}\), we require,
\[
n \left(1 - n^4\right) = m \left(1 - m^4\right).
\]
The only solutions are \(n = m\) or \(n = 1, -1, 0\). For example,
\begin{itemize}
    \item If \(n = 1\), then \(\omega_{1,0} = k^4 \cdot 1 \cdot (1 - 1^4) = 0\).
    \item If \(n = -1\), then \(\omega_{-1,0} = k^4 \cdot (-1) \cdot (1 - (-1)^4) = 0\).
    \item If \(n = 0\), then \(\omega_{0,0} = k^4 \cdot 0 \cdot (1 - 0^4) = 0\).
\end{itemize}
Thus, 
\begin{equation}\label{eq:eig0}
\omega_{1,0} = \omega_{-1,0} = \omega_{0,0} = 0.
\end{equation}
If \(\xi \neq 0\), then \(y = n + \xi\) is not an integer. For \(\omega_{n,\xi} = \omega_{m,\xi}\), we require
\[
(n + \xi) \left(1 - (n + \xi)^4\right) = (m + \xi) \left(1 - (m + \xi)^4\right).
\]
This is a nonlinear equation in \(\xi\), and for \(\xi \in \left(-\frac{1}{2}, \frac{1}{2}\right]\), it has no solutions unless \(n = m\). This is because the function \(f(y) = y(1 - y^4)\) is strictly monotonic in the intervals \(\left(n - \frac{1}{2}, n + \frac{1}{2}\right)\) for \(n \neq 0, 1, -1\), ensuring distinct values of \(\omega_{n,\xi}\) for distinct \(n\). Therefore, there is only one collision of eigenvalues on the origin. give in \eqref{eq:eig0}, which we will analyze for stability in what follows.

Clearly $\lambda = 0$ is an isolated eigenvalue of $\mathcal{T}_{0,0}$ with algebraic and geometric multiplicity three and
\begin{equation}
    \ker \left(\mathcal{T}_{0,0}\right) = \text{span} \left\{ \cos(z), \sin(z), \frac{1}{\sqrt{2}} \right\}.
\end{equation}

\begin{lemma}\label{l:31}
    For any $a$ and $\xi$ sufficiently small, the following properties hold
    \begin{enumerate}
        \item The spectrum of $\mathcal{T}_{a,\xi}$ decomposes as 
        \begin{equation*}
           \operatorname{spec}(\mathcal T_{a,\xi})= \operatorname{spec}_0(\mathcal T_{a,\xi})\cup\operatorname{spec}_1(\mathcal T_{a,\xi})
        \end{equation*}with
        \begin{equation*}
           \operatorname{spec}_0(\mathcal T_{a,\xi})\subset B(0;R/3),\quad \operatorname{spec}_1(\mathcal T_{a,\xi})\subset \C\setminus \overline{B(0;R/2)},
        \end{equation*}where $R=5k^4$.
        \item The spectral projection $\mathcal{P}_{a,\xi}$ associated with $\operatorname{spec}_0(\mathcal T_{a,\xi})$ 
        \begin{equation}\label{eq:P}
        \mathcal{P}_{a,\xi}=\dfrac{1}{2\pi i}\int_{\partial B(0;R/3)}(\lambda-\mathcal{T}_{a,\xi})^{-1}d\lambda
        \end{equation}
        satisfies $\|\mathcal{P}_{a,\xi}-\mathcal{P}_{0,0}\|=O(|\xi|+|a|)$. The operators $\mathcal{P}_{a,\xi}$ are well defined projectors commuting with $\mathcal{T}_{a,\xi}$, that is, 
        \begin{equation}
\mathcal{P}^2_{a,\xi}=\mathcal{P}_{a,\xi},\quad\mathcal{P}_{a,\xi}\mathcal{T}_{a,\xi}=\mathcal{T}_{a,\xi}\mathcal{P}_{a,\xi}.
        \end{equation}       
        \item $\operatorname{spec}_1(\mathcal T_{a,\xi})\subset i \R$
        \item The projectors $\mathcal{P}_{a,\xi}$ are similar one to each other: the transformation operators
        \begin{equation}\label{e:u}
        \mathcal{U}_{a,\xi}:=(\mathcal{I}-(\mathcal{P}_{a,\xi}-\mathcal{P}_{0,0})^2)^{-1/2}[\mathcal{P}_{a,\xi}\mathcal{P}_{0,0}+(\mathcal{I}-\mathcal{P}_{a,\xi})(\mathcal{I}-\mathcal{P}_{0,0})]
        \end{equation} 
        are bounded and invertible in $H^s(\mathbb{T})$ and in $L^2(\mathbb{T})$, with inverse
        \begin{equation}\label{e:u1}
\mathcal{U}^{-1}_{a,\xi}=[\mathcal{P}_{0,0}\mathcal{P}_{a,\xi}+(\mathcal{I}-\mathcal{P}_{0,0})(\mathcal{I}-\mathcal{P}_{a,\xi})](\mathcal{I}-(\mathcal{P}_{a,\xi}-\mathcal{P}_{0,0})^2)^{-1/2},
        \end{equation}
        and 
        \begin{equation}\label{e:u2}
\mathcal{U}_{a,\xi}\mathcal{P}_{0,0}\mathcal{U}^{-1}_{a,\xi}=\mathcal{P}_{a,\xi},\quad \mathcal{U}^{-1}_{a,\xi}\mathcal{P}_{a,\xi}\mathcal{U}_{a,\xi}=\mathcal{P}_{0,0}.
\end{equation}
        \item The subspaces $V_{a,\xi}=\text{Range}(\mathcal{P}_{a,\xi})$ are isomorphic one to each other, $V_{a,\xi}=\mathcal{U}_{a,\xi}V_{0,0}$. In particular, for any $a$ and $\xi$ sufficiently small, $\dim{V_{a,\xi}}=\dim{V_{0,0}}=3$.
    \end{enumerate}
 \end{lemma}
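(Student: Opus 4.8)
The plan is to run Kato-type analytic perturbation theory, viewing $\mathcal{T}_{a,\xi}$ as a perturbation of $\mathcal{T}_{0,0}$, with care taken because these are unbounded (fifth-order) operators. The inputs are already on the table: $\lambda=0$ is an isolated eigenvalue of $\mathcal{T}_{0,0}$ of algebraic (hence geometric) multiplicity $3$ with eigenspace $\operatorname{span}\{\cos z,\sin z,1/\sqrt2\}$, and the rest of $\operatorname{spec}(\mathcal{T}_{0,0})$ consists of the simple purely imaginary points $i\omega_{n,0}=ik^4 n(1-n^4)$, $|n|\ge 2$, all of modulus $\ge 30k^4\gg R=5k^4$. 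Moreover $\mathcal{T}_{a,\xi}-\mathcal{T}_{0,0}=(\mathcal{T}_{a,\xi}-\mathcal{T}_{0,\xi})+(\mathcal{T}_{0,\xi}-\mathcal{T}_{0,0})$, where the first summand is a third-order operator with coefficients $O(|a|)$ (since $w=O(a)$) and the second is a fourth-order operator with coefficients $O(|\xi|)$. Both summands are relatively bounded with respect to $\mathcal{T}_{0,0}$ with relative bound $0$, and relatively compact; since $\mathcal{T}_{0,0}$ has compact resolvent on $\mathbb{T}$, so does $\mathcal{T}_{a,\xi}$ (spectrum discrete, no essential spectrum created), and $\mathcal{T}_{a,\xi}\to\mathcal{T}_{0,0}$ in the norm-resolvent sense, uniformly for $\lambda$ on the compact circles $\partial B(0;R/3)$ and $\partial B(0;R/2)\subset\rho(\mathcal{T}_{0,0})$.

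For item (1), upper semicontinuity of the spectrum under norm-resolvent convergence gives, for $|a|+|\xi|$ small: both circles lie in $\rho(\mathcal{T}_{a,\xi})$; the annulus $R/3\le|\lambda|\le R/2$ contains no spectrum, since the three eigenvalues near $0$ move only by $O(|a|+|\xi|)$ while the eigenvalues near $\{i\omega_{n,0}:|n|\ge 2\}$ cannot enter $B(0;R/2)$; and the total algebraic multiplicity enclosed by $\partial B(0;R/3)$ equals that for $\mathcal{T}_{0,0}$, namely $3$. For item (2), $\mathcal{P}_{a,\xi}$ defined by \eqref{eq:P} is, by the standard properties of Riesz projections, a bounded projector commuting with $\mathcal{T}_{a,\xi}$. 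For the estimate I would write $\mathcal{P}_{a,\xi}-\mathcal{P}_{0,0}=\frac{1}{2\pi i}\oint_{\partial B(0;R/3)}\big[(\lambda-\mathcal{T}_{a,\xi})^{-1}-(\lambda-\mathcal{T}_{0,0})^{-1}\big]\,d\lambda$ and estimate the integrand by the second resolvent identity in two steps, first from $\mathcal{T}_{0,0}$ to $\mathcal{T}_{0,\xi}$ and then to $\mathcal{T}_{a,\xi}$, using that $(\lambda-\mathcal{T}_{0,0})^{-1}$ and $(\lambda-\mathcal{T}_{0,\xi})^{-1}$ map $L^2(\mathbb{T})$ boundedly into $H^5(\mathbb{T})$ while the intervening operators have order $\le 4$, resp.\ $\le 3$, with coefficients $O(|\xi|)$, resp.\ $O(|a|)$. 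This yields $\|\mathcal{P}_{a,\xi}-\mathcal{P}_{0,0}\|=O(|a|+|\xi|)$, in particular $<1$ for small parameters.

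Item (3) rests on the antilinear symmetry $J=KR$ obtained by composing the reality $K$ (complex conjugation) and the reversibility $R$ ($z\mapsto -z$) of $\mathcal{T}_a$: a direct computation from $\mathcal{T}_{a,\xi}=e^{-i\xi z}\mathcal{T}_a e^{i\xi z}$ gives $J\mathcal{T}_{a,\xi}J^{-1}=-\mathcal{T}_{a,\xi}$, whence $\lambda\in\operatorname{spec}(\mathcal{T}_{a,\xi})\Rightarrow -\overline{\lambda}\in\operatorname{spec}(\mathcal{T}_{a,\xi})$, so each fiber's spectrum is symmetric across $i\mathbb{R}$. By the monotonicity of $f(y)=y(1-y^4)$ on $(n-\tfrac12,n+\tfrac12)$ for $|n|\ge 2$ recorded above, the far eigenvalues $i\omega_{n,\xi}$ are simple and uniformly separated from one another and from the near-zero cluster; since the $a$-perturbation is small relative to $\mathcal{T}_{0,\xi}$ (its relative size near the $n$-th mode is $\sim |a|/(k^2 n^2)$, even shrinking in $|n|$), each $i\omega_{n,\xi}$ perturbs to a single simple eigenvalue $\lambda_n(a,\xi)$, and these exhaust $\operatorname{spec}_1(\mathcal{T}_{a,\xi})$. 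As $-\overline{\lambda_n}$ is then an eigenvalue near $-\overline{i\omega_{n,\xi}}=i\omega_{n,\xi}$, simplicity forces $\lambda_n=-\overline{\lambda_n}$, i.e.\ $\lambda_n\in i\mathbb{R}$; equivalently, a simple eigenvalue on $i\mathbb{R}$ can leave the axis only via a collision, and no collision occurs among the far eigenvalues for small $(a,\xi)$.

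Items (4) and (5) are operator-algebraic. With $\|\mathcal{P}_{a,\xi}-\mathcal{P}_{0,0}\|<1$, the spectrum of $(\mathcal{P}_{a,\xi}-\mathcal{P}_{0,0})^2$ lies in a disc of radius $<1$ about $0$, so $\big(\mathcal{I}-(\mathcal{P}_{a,\xi}-\mathcal{P}_{0,0})^2\big)^{-1/2}$ is defined and bounded by the holomorphic functional calculus, and Kato's construction then gives verbatim that $\mathcal{U}_{a,\xi}$ of \eqref{e:u} is bounded and invertible with inverse \eqref{e:u1} and satisfies \eqref{e:u2}; all of this is checked by direct algebra using $\mathcal{P}_{a,\xi}^2=\mathcal{P}_{a,\xi}$ and $\mathcal{P}_{0,0}^2=\mathcal{P}_{0,0}$. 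Boundedness and invertibility on each $H^s(\mathbb{T})$ hold because $\mathcal{P}_{0,0}$ is a finite-rank projector onto $\operatorname{span}\{\cos z,\sin z,1/\sqrt2\}\subset C^\infty$ and $\mathcal{P}_{a,\xi}$ is a finite-rank projector onto $V_{a,\xi}$, whose elements are $C^\infty$ by elliptic regularity applied to $\mathcal{T}_{a,\xi}v=\lambda v$; hence both, and so $\mathcal{U}_{a,\xi}$, are bounded on every $H^s(\mathbb{T})$. Finally \eqref{e:u2} gives $V_{a,\xi}=\mathcal{U}_{a,\xi}V_{0,0}$, so $V_{a,\xi}\cong V_{0,0}$ and $\dim V_{a,\xi}=\dim V_{0,0}=3$. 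The main obstacle is that, $\mathcal{T}_{a,\xi}$ being unbounded, naive operator-norm convergence is unavailable, so the whole argument must be carried in the language of relatively bounded perturbations and norm-resolvent convergence; the real work is the $O(|a|+|\xi|)$ resolvent/projection estimate with constants uniform in $\lambda$ on the contour, together with (for item (3)) making the ``one simple eigenvalue per $i\omega_{n,\xi}$'' statement uniform in $n$ --- both controlled by the gain of five derivatives in $(\lambda-\mathcal{T}_{0,0})^{-1}$ against the at most three- or four-derivative cost of the perturbation.
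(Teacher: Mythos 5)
Your proposal is correct and takes essentially the same route as the paper, which does not prove the lemma directly but delegates it to the Kato-type perturbation argument of Lemma~3.1 in \cite{masperogkdv}: relative-boundedness/resolvent estimates uniform on the contour, Riesz projections and the second resolvent identity for the $O(|a|+|\xi|)$ bound, the antilinear symmetry $\lambda\mapsto-\overline{\lambda}$ combined with simplicity and separation of the far eigenvalues for $\operatorname{spec}_1\subset i\mathbb{R}$, and Kato's transformation operators for items (4)--(5). The only nitpick is quantitative and harmless: the third-order $O(a)$ perturbation measured against the spectral gaps $\sim 5k^4n^4$ has relative size $O\!\left(|a|/(k^2 n)\right)$ rather than $O\!\left(|a|/(k^2 n^2)\right)$, which still vanishes as $|n|\to\infty$ and so does not affect the uniform-in-$n$ argument.
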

 \begin{proof}
     See the proof of Lemma~3.1 in \cite{masperogkdv} in a similar situation.
 \end{proof}
Lemma~\ref{l:31} shows that the operator $\mathcal{U}_{a,\xi}$ is an isomorphism between $V_{0,0}$ and $V_{a,\xi}$. Consider the decomposition of the spectrum of $\mathcal T_{a,\xi}$ in Lemma~\ref{l:31}. The eigenvalues in
$\operatorname{spec}_0(\mathcal T_{a,\xi})$ are the eigenvalues of the restriction of $\mathcal{T}_{a,\xi}$ to the three-dimensional subspace $V_{a,\xi}$. We determine the location of these eigenvalues by computing successively
a basis of  $V_{a,\xi}$, 
the $3\times3$ matrix representing the action of $\mathcal T_{a,\xi}$
on this basis, and the eigenvalues of this matrix. Note that for $a=0$, $V_{0,\xi}$ is spanned by $\{\cos z, \sin z,1/\sqrt{2}\}$. We use the transformation operators $\mathcal{U}_{a,\xi}$ derived in Lemma \ref{l:31} to construct a basis for $V_{a,\xi}$. We give expansions in the following lemma and provide the proof in Appendix~\ref{app:eig}.
\begin{lemma}\label{lem:eigenfunctions} For $a$ and $\xi$ sufficiently small, expansion of a basis for $V_{a,\xi}$ is
    \begin{align*}
     \phi_{1}(z)&=\cos(z)+\frac{1}{k^{2}}\cos(2z)a-\frac{1}{k^{2}}\sin(2z) ia\xi+\frac{1}{16k^4}(9\cos(3z) -20 \cos(z))a^{2} \\
     &\quad +\mathcal{O}((a+\xi)^3),\\
     \phi_{2}(z)&=\sin(z)+\frac{1}{k^{2}}\sin(2z) a+\frac{1}{k^{2}}\cos(2z) i a\xi+\frac{1}{16k^4}(9\sin(3z) -20 \sin(z))a^{2}\\
     &\quad +\mathcal{O}((a+\xi)^3),\\
     \phi_{3}(z)&=\frac{1}{\sqrt{2}}+\mathcal{O}((a+\xi)^3).
\end{align*}
\end{lemma}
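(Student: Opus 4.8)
\emph{Proof strategy.} The plan is to realize the basis as the image of the fixed basis $\{\cos z,\sin z,1/\sqrt2\}$ of $V_{0,0}$ under the transformation operator $\mathcal U_{a,\xi}$ of Lemma~\ref{l:31}, and to expand $\mathcal U_{a,\xi}$ to second order in the two small parameters by analytic (Kato) perturbation theory. Set $\psi_1=\cos z$, $\psi_2=\sin z$, $\psi_3=1/\sqrt2$ and $\phi_j:=\mathcal U_{a,\xi}\psi_j$; by Lemma~\ref{l:31} these are a basis of $V_{a,\xi}$ for $(a,\xi)$ small, and since $\mathcal P_{0,0}\psi_j=\psi_j$ the formula \eqref{e:u} simplifies to $\phi_j=(\mathcal I-(\mathcal P_{a,\xi}-\mathcal P_{0,0})^2)^{-1/2}\mathcal P_{a,\xi}\psi_j$, so the whole computation reduces to expanding the Riesz projection $\mathcal P_{a,\xi}$ from \eqref{eq:P}.

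First I would expand the operator: writing $\mathcal T_{a,\xi}=\mathcal T_{0,0}+\mathcal B_{a,\xi}$ and substituting $c=k^4+c_2a^2+O(a^4)$ and $w=a\cos z+a^2w_2+a^3w_3+O(a^4)$ from Theorem~\ref{thm:existence} into \eqref{eq:Ta}--\eqref{E:Lxi} (with $\partial_z\rightsquigarrow\partial_z+i\xi$ coming from conjugation by $e^{i\xi z}$), one collects
\[
\mathcal B_{a,\xi}=a\,\mathcal B_{10}+\xi\,\mathcal B_{01}+a^2\,\mathcal B_{20}+a\xi\,\mathcal B_{11}+\xi^2\,\mathcal B_{02}+O\big((|a|+|\xi|)^3\big),
\]
where each $\mathcal B_{ij}$ is an explicit differential operator with trigonometric-polynomial coefficients; e.g. $\mathcal B_{10}v=-15k^2\partial_z\big(\cos z\,(v_{zz}-v)\big)$ and $\mathcal B_{01}=ik^4(1-5\partial_z^4)$. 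Next I would expand $\mathcal P_{a,\xi}$: on the circle $|\lambda|=R/3$ the resolvent $(\lambda-\mathcal T_{0,0})^{-1}$ is bounded (the nonzero eigenvalues of $\mathcal T_{0,0}$ are $\pm30ik^4,\pm240ik^4,\dots$, all of modulus $\ge30k^4>R/3$), so the Neumann series $(\lambda-\mathcal T_{a,\xi})^{-1}=\sum_{n\ge0}[(\lambda-\mathcal T_{0,0})^{-1}\mathcal B_{a,\xi}]^n(\lambda-\mathcal T_{0,0})^{-1}$ may be inserted into \eqref{eq:P} and integrated term by term. Using the Laurent expansion $(\lambda-\mathcal T_{0,0})^{-1}=\lambda^{-1}\mathcal P_{0,0}-S-\lambda S^2-\cdots$, where $S$ is the reduced resolvent ($S=0$ on $V_{0,0}$, $Se^{inz}=(i\omega_{n,0})^{-1}e^{inz}$ for $|n|\ge2$), and picking up residues at $\lambda=0$, one obtains the standard first- and second-order perturbation formulas for $\mathcal P_{a,\xi}$ in terms of $\mathcal P_{0,0}$, $S$ and the $\mathcal B_{ij}$, together with $(\mathcal I-(\mathcal P_{a,\xi}-\mathcal P_{0,0})^2)^{-1/2}=\mathcal I+\tfrac12(\mathcal P_{a,\xi}-\mathcal P_{0,0})^2+\cdots$.

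Everything in sight acts explicitly on the Fourier basis, so evaluating $\phi_j=\mathcal U_{a,\xi}\psi_j$ order by order in $(a,\xi)$ is a finite computation. At first order $\phi_1=\cos z-aS\mathcal B_{10}\cos z-\xi S\mathcal B_{01}\cos z+\cdots$; since $\mathcal B_{01}\cos z\propto\cos z\in\ker S$ the $\xi$-term drops, while $S\mathcal B_{10}\cos z=-k^{-2}\cos2z$ produces the stated correction $k^{-2}\cos2z\,a$, and similarly for $\phi_2,\phi_3$. The informative part is to carry $\phi_1,\phi_2$ through second order and to verify the cancellations behind $\phi_3=1/\sqrt2+O((|a|+|\xi|)^3)$: there $S(1/\sqrt2)=0$ and each $\mathcal B_{ij}(1/\sqrt2)$ lands in $V_{0,0}=\ker S$ at the relevant orders, killing all low-order corrections. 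Linear independence of $\phi_1,\phi_2,\phi_3$ is automatic, since $\mathcal U_{a,\xi}$ is an isomorphism by Lemma~\ref{l:31}.

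I expect the main obstacle to be bookkeeping rather than analysis: assembling $\mathcal B_{a,\xi}$ correctly to second order (both the $e^{i\xi z}$-conjugation and the nonlinear expansion of $w$ contribute, with genuine cross terms at order $a\xi$) and then running the second-order Kato formula --- which involves several pieces such as $S\mathcal B S\mathcal B\mathcal P_{0,0}$, $\mathcal P_{0,0}\mathcal B S\mathcal B S$, $S\mathcal B\mathcal P_{0,0}\mathcal B S$, the linear contributions of $\mathcal B_{20},\mathcal B_{11},\mathcal B_{02}$, and the $\tfrac12(\mathcal P_{a,\xi}-\mathcal P_{0,0})^2$ normalization term --- without sign or combinatorial slips. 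A reversibility symmetry ($z\mapsto-z$ combined with $\xi\mapsto-\xi$ and complex conjugation) can be invoked both to halve the labor and to explain why the $\cos$-type terms in $\phi_1$ are real while the $\sin2z$ correction carries the factor $i\xi$.
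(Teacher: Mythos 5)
Your proposal follows essentially the same route as the paper's Appendix~\ref{app:eig}: transport the basis $\{\cos z,\sin z,1/\sqrt2\}$ of $V_{0,0}$ by $\mathcal{U}_{a,\xi}$, reduce via the Neumann series of $(\mathcal{I}-(\mathcal{P}_{a,\xi}-\mathcal{P}_{0,0})^2)^{-1/2}$ to an expansion of the Riesz projection $\mathcal{P}_{a,\xi}$ in powers of $a$ and $\xi$, and evaluate everything explicitly on Fourier modes (your $\mathcal{B}_{10},\mathcal{B}_{01},\dots$ are exactly the paper's $\mathcal{T}'_{0,0},\dot{\mathcal{T}}_{0,0},\dots$, and your reduced-resolvent/Kato formulas are equivalent to the paper's contour-integral expressions computed by partial fractions). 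Your spot check $S\mathcal{B}_{10}\cos z=-k^{-2}\cos 2z$ reproduces the paper's coefficient $\phi_1^{a}=\cos(2z)/k^{2}$, so the strategy is correct, with only the remaining second-order bookkeeping left to carry out as you note.
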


To proceed, we expand the operator $\mathcal{T}_{a,\xi}$  by substituting the expansion of $c(a)$ and  $w$ obtained in Theorem~\ref{thm:existence}, we obtain
\begin{equation} \label{ew}
   \mathcal{T}_{a,\xi} = \mathcal{T}_{0,\xi} +  {a}\mathcal{T}_{1} +{a^2} \mathcal{T}_{2} + \mathcal{O}(a^3),
\end{equation}
where $\mathcal{T}_{0,\xi}$ is in \eqref{eq:T0xi} and
\begin{subequations}
\begin{align}
\mathcal{T}_1 &= -15 k^2 (\partial_z + i \xi) \cos(z) \left( (\partial_z + i \xi)^2 - 1 \right), \\
\mathcal{T}_2 &= \frac{15}{2} (\partial_z + i \xi) \left( 11 + 15 (\partial_z + i \xi)^2 - \cos(2z) \left( (\partial_z + i \xi)^2 - 1 \right) \right).
\end{align}
\end{subequations}

In what follows, we present the action of $\mathcal{T}_{0,\xi}$, $\mathcal{T}_1$, and $\mathcal{T}_2$ on $\cos(n z)$ and $\sin(n z)$ for $ n = 1, 2, 3$. We start with $\mathcal{T}_{0,\xi}$ and calculate that
\begin{align*}
 \mathcal{T}_{0,\xi}(\cos(z)) &= 
k^4 \left(-4 i \xi \cos (z) - 10 i \xi^3 \cos (z) - i \xi^5 \cos (z) + 10 \xi^2 \sin (z) + 5 \xi^4 \sin (z) \right),\\
 \mathcal{T}_{0,\xi}(\sin(z)) &=k^4 \left(-10 \xi^2 \cos (z) - 5 \xi^4 \cos( z) - 4 i \xi \sin (z) - 10 i \xi^3 \sin (z) - i \xi^5 \sin (z) \right), \\
 \mathcal{T}_{0,\xi}(\cos(2z)) &=
k^4 \left(-79 i \xi \cos (2z) - 40 i \xi^3 \cos (2z) - i \xi^5 \cos (2z) + 30 \sin (2z) + 80 \xi^2 \sin (2z)\right.\\
&\quad \left. + 10 \xi^4 \sin (2z) \right), \\
 \mathcal{T}_{0,\xi}(\sin(2z)) &= 
k^4 \left(-30 \cos (2z) - 80 \xi^2 \cos (2z)- 10 \xi^4 \cos (2z) - 79 i \xi \sin (2z) - 40 i \xi^3 \sin (2z) \right.\\
&\quad \left.- i \xi^5 \sin (2z) \right), \\
 \mathcal{T}_{0,\xi}(\cos(3z)) &=
k^4 \left(-404 i \xi \cos (3z) - 90 i \xi^3 \cos (3z) - i \xi^5 \cos (3z) + 240 \sin (3z) \right.\\
&\quad \left. + 270 \xi^2 \sin (3z) + 15 \xi^4 \sin (3z) \right), \\
 \mathcal{T}_{0,\xi}(\sin(3z)) &= 
k^4 \left(-240 \cos (3z) - 270 \xi^2 \cos (3z) - 15 \xi^4 \cos (3z) - 404 i \xi \sin (3z) \right.\\
&\quad \left.- 90 i \xi^3 \sin (3z) - i \xi^5 \sin (3z) \right).
 \end{align*}
 
The action of $\mathcal{T}_1$  on $\cos(n z)$ and $\sin(n z)$ for $n = 1, 2$ is
 \begin{align*}
\mathcal{T}_{1}(\cos(z)) &= - 30 k^2 \sin (2z) 
+ \xi (30 i k^2 + 60 i k^2 \cos (2z)) 
+ \xi^2 (-45 k^2 \sin (2z))\\ 
&\quad + \xi^3 (15 i k^2 + 15 i k^2 \cos (2z)), \\
\mathcal{T}_{1}(\sin(z)) &= 30 k^2 \cos (2z) 
+ \xi (60 i k^2 \sin (2z)) 
+ \xi^2 (30 k^2 + 45 k^2 \cos (2z)) 
+ \xi^3 (15 i k^2 \sin (2z)), \\
\mathcal{T}_{1}(\cos(2z)) &= \frac{75}{2} k^2 \sin (z) + \frac{225}{2} k^2 \sin (3z) 
+ \xi \left(105 i k^2 \cos (z) + 165 i k^2 \cos (3z) \right) 
 \\
 &+ \xi^2 \left(-\frac{135}{2} k^2 \sin( z )- \frac{165}{2} k^2 \sin (3z) \right) 
+ \xi^3 \left(15 i k^2 \cos (z) + 15 i k^2 \cos (3z) \right), \\
\mathcal{T}_{1}(\sin(2z)) &= \frac{75}{2} k^2 \cos (z) + \frac{225}{2} k^2 \cos (3z) 
+ \xi \left(105 i k^2 \sin (z) + 165 i k^2 \sin (3z) \right) \\
&\quad + \xi^2 \left(\frac{135}{2} k^2 \cos (z) + \frac{165}{2} k^2 \cos (3z) \right) 
+ \xi^3 \left(15 i k^2 \sin (z )+ 15 i k^2 \sin (3z) \right).
\end{align*}
Finally, we calculate the action of the $\mathcal{T}_2$ on $\cos(z)$ and $\sin(z)$ as
\begin{align*}
\mathcal{T}_{2}(\cos(z)) &= \frac{45}{2} \sin (z) - \frac{45}{2} \sin (3z) 
+ \xi \left(-255 i \cos (z)+ 30 i \cos (3z) \right) \\
&\quad + \xi^2 \left(\frac{1365}{4} \sin (z) - \frac{75}{4} \sin (3z) \right) 
+ \xi^3 \left(-\frac{435}{4} i \cos (z) + \frac{15}{4} i \cos (3z) \right), \\
\mathcal{T}_{2}(\sin(z)) &= -\frac{75}{2} \cos (z) + \frac{45}{2} \cos (3z) 
+ \xi \left(-255 i \sin (z) + 30 i \sin (3z) \right) \\
&\quad + \xi^2 \left(-\frac{1335}{4} \cos (z) + \frac{75}{4} \cos (3z) \right) 
+ \xi^3 \left(-\frac{465}{4} i \sin (z) + \frac{15}{4} i \sin (3z) \right).
\end{align*}

In what follows, we calculate the action of $\mathcal{T}_{a,\xi}$ to eigenfunctions $\phi_{i}(z)$, $i=1,2,3$ in Lemma~\ref{lem:eigenfunctions} up to second order in $a$ and $\xi$ to obtain
\begin{align*}
    \mathcal{T}_{a,\xi}(\phi_{1}(z)) &= -4i k^{4} \cos (z) \xi + 10 k^4 \sin (z) \xi^2  
    + i k^{2} (15 + 4 \cos (2z)) a \xi  \\
    &\quad + (-15 \sin (z) + 315 \sin (3z)) a^2 + O((a+\xi)^3),\\
    \mathcal{T}_{a,\xi}(\phi_{2}(z)) &= -4i k^4 \sin (z) \xi - 10 k^4 \cos(z) \xi^2
    + 4i k^2 \sin(2 z) a\xi\\&\quad - 315 \cos(3z) a^2 + O((a+\xi)^3), \\
\mathcal{T}_{a,\xi}{(\phi_{3})(z)}&=
\frac{1}{\sqrt{2}}\left(i k^4 \xi
- 15 k^2 \sin(z)a +15i k^2 \cos(z)a\xi 
 -15  \sin (2z)a^2\right)\\
 &\quad + O((a+\xi)^3).
\end{align*}
Next, we compute a $3\times 3$ matrix
\[
\mathbb{B}_{a,\xi}=\left(\langle \mathcal{T}_{a,\xi}{(\phi_{i})(z)}, \phi_j(z)\rangle \right)_{i,j=1,2,3}
\]
which captures the action of $\mathcal{T}_{a,\xi}$ on the eigenspace $V_{a,\xi}$, where inner product is given by \eqref{eq:inn}. A straightforward calculation reveals that
\[
\mathbb{B}_{a,\xi} = 
\begin{pmatrix}
-4i k^4 \xi & -15a^2 + 10k^4 \xi^2 & 15i \sqrt{2} k^2 a\xi \\
-10k^4 \xi^2  & -4i k^4 \xi & 0 \\
\frac{15 i \sqrt{2} k^2 a\xi}{2} & \frac{-15\sqrt{2} k^2 a}{2} & i k^4 \xi 
\end{pmatrix}+O((a+\xi)^3)
\]
The characteristic polynomial of $\mathbb{B}_{a,\xi}$ in $\lambda$ is
\begin{align*}
\mathbb{P}(\lambda; a, \xi,k):=&-\lambda^3 - 7i k^4 \xi\, \lambda^2 
- \left(75 a^2 k^4 \xi^2 - 8 k^8 \xi^2 + 100 k^8 \xi^4\right) \lambda \\
&\quad + \left(1200 i a^2 k^8 \xi^3 - 16 i k^{12} \xi^3 + 100 i k^{12} \xi^5\right).
\end{align*}
Setting \( \lambda=i \mu \), $\mathbb{P}(\lambda; a, \xi,k)=i\mathbb{Q}(\mu; a, \xi,k)$ where
\begin{align*}
\mathbb{Q}(\mu; a, \xi,k):=&\mu^3 + 7 k^4 \xi\, \mu^2 
+ \left(-75 a^2 k^4 \xi^2 + 8 k^8 \xi^2 - 100 k^8 \xi^4\right) \mu \\
&+ \left(1200 a^2 k^8 \xi^3 - 16 k^{12} \xi^3 + 100 k^{12} \xi^5\right)
\end{align*}
The discriminant of $\mathbb{Q}(\mu; a, \xi,k)$ is
\begin{align*}
\Delta &= 15625\, k^{12} \xi^6 \Big(
108\, a^6
- 3231\, a^4 k^4
+ 48\, a^2 k^8
+ 432\, a^4 k^4 \xi^2
- 1488\, a^2 k^8 \xi^2\\
&\quad + 16\, k^{12} \xi^2
+ 576\, a^2 k^8 \xi^4
- 128\, k^{12} \xi^4
+ 256\, k^{12} \xi^6
\Big)
\end{align*}
For small $|a|$ and $|\xi|$, the sign of $\Delta$ will be determined by the leading term $k^{4} \xi^2+3 a^2$, which is always positive.
Therefore, for sufficiently small $|a|$ and $|\xi|$, all roots of $\mathbb{Q}(\mu; a, \xi,k)$ are real and hence, all roots of characteristic polynomial, $\mathbb{P}(\lambda; a, \xi,k)$, are purely imaginary. This completes the proof of Theorem~\ref{thm:spec}.



\appendix

\section{Proof of Lemma~\ref{lem:eigenfunctions}}\label{app:eig}
We expand $\mathcal{U}_{a,\xi}$ in \eqref{e:u} in $a$ and $\xi$, and evaluate it on $\cos(z)$, $\sin(z)$, and $\frac{1}{\sqrt{2}}$ to obtain $\phi_i$, $i=1,2,3$ in Lemma~\ref{lem:eigenfunctions}. 
By applying the Neumann series to $(\mathcal{I}-(\mathcal{P}_{a,\xi}-\mathcal{P}_{0,0})^2)^{-1/2}$, \eqref{e:u} can be expressed as
 \begin{equation*}\label{e:ue}
\mathcal{U}_{a,\xi}\mathcal{P}_{0,0}=\mathcal{P}_{a,\xi}\mathcal{P}_{0,0}+\dfrac{1}{2}(\mathcal{P}_{a,\xi}-\mathcal{P}_{0,0})^2\mathcal{P}_{a,\xi}\mathcal{P}_{0,0}+O(\mathcal{P}_{a,\xi}-\mathcal{P}_{0,0})^4
 \end{equation*}
Using this, it follows that
\begin{equation*}
    \mathcal{U}_{0,0} \mathcal{P}_{0,0} = \mathcal{P}_{0,0}, \quad \mathcal{U}'_{0,0} \mathcal{P}_{0,0} = \mathcal{P}'_{0,0} \mathcal{P}_{0,0}, \quad \dot{\mathcal{U}}_{0,0} \mathcal{P}_{0,0} = \dot{\mathcal{P}}_{0,0} \mathcal{P}_{0,0},
\end{equation*}
\begin{equation*}
    \mathcal{U}''_{0,0} \mathcal{P}_{0,0} = \mathcal{P}''_{0,0} \mathcal{P}_{0,0}, \quad \ddot{\mathcal{U}}_{0,0} \mathcal{P}_{0,0} = \ddot{\mathcal{P}} _{0,0} \mathcal{P}_{0,0}, \quad  \dot{\mathcal{U}}'_{0,0} \mathcal{P}_{0,0} = \dot{\mathcal{P}}'_{0,0}\mathcal{P}_{0,0} - \frac{1}{2} \mathcal{P}_{0,0} \dot{\mathcal{P}}'_{0,0} \mathcal{P}_{0,0},
\end{equation*}
where $'$ and $\cdot$ are derivatives with respect to $a$ and $\xi$ respectively. From \eqref{eq:P}, we obtain that
\begin{align*}
    \mathcal{P}'_{0,0} &= \frac{1}{2\pi i} \oint_{\Gamma} (\mathcal{T}_{0,0} - \lambda)^{-1}\mathcal{T}'_{0,0} (\mathcal{T}_{0,0} - \lambda)^{-1} d\lambda,\\
    \dot{\mathcal{P}}_{0,0} &= \frac{1}{2\pi i} \oint_{\Gamma} (\mathcal{T}_{0,0} - \lambda)^{-1} \dot{\mathcal{T}}_{0,0} (\mathcal{T}_{0,0} - \lambda)^{-1} d\lambda,\\
    \dot{\mathcal{P}'}_{0,0} &= -\frac{1}{2\pi i} \oint_{\Gamma} (\mathcal{T}_{0,0} - \lambda)^{-1} \dot{\mathcal{T}}_{0,0} (\mathcal{T}_{0,0} - \lambda)^{-1} \mathcal{T}_{0,0}' (\mathcal{T}_{0,0} - \lambda)^{-1} d\lambda\\
   &\quad - \frac{1}{2\pi i} \oint_{\Gamma} (\mathcal{T}_{0,0} - \lambda)^{-1} \mathcal{T}'_{0,0} (\mathcal{T}_{0,0} - \lambda)^{-1} \dot{\mathcal{T}}_{0,0} (\mathcal{T}_{0,0} - \lambda)^{-1} d\lambda\\
    &\quad + \frac{1}{2\pi i} \oint_{\Gamma} (\mathcal{T}_{0,0} - \lambda)^{-1} \dot{\mathcal{T}}'_{0,0} (\mathcal{T}_{0,0} - \lambda)^{-1} d\lambda,\\
    \mathcal{P}''_{0,0} &= -\frac{1}{\pi i} \oint_{\Gamma} (\mathcal{T}_{0,0} - \lambda)^{-1} \mathcal{T}'_{0,0} (\mathcal{T}_{0,0} - \lambda)^{-1} \mathcal{T}_{0,0}' (\mathcal{T}_{0,0} - \lambda)^{-1} d\lambda\\
    &\quad + \frac{1}{2\pi i} \oint_{\Gamma} (\mathcal{T}_{0,0} - \lambda)^{-1} \mathcal{T}''_{0,0} (\mathcal{T}_{0,0} - \lambda)^{-1} d\lambda.
\end{align*}
and
\begin{align*}
   \ddot{\mathcal{P}}_{0,0} &= -\frac{1}{\pi i} \oint_{\Gamma} (\mathcal{T}_{0,0} - \lambda)^{-1} \dot{\mathcal{T}}_{0,0} (\mathcal{T}_{0,0} - \lambda)^{-1} \dot{\mathcal{T}}_{0,0} (\mathcal{T}_{0,0} - \lambda)^{-1} d\lambda\\
    &\quad + \frac{1}{2\pi i} \oint_{\Gamma} (\mathcal{T}_{0,0} - \lambda)^{-1} \ddot{\mathcal{T}}_{0,0} (\mathcal{T}_{0,0} - \lambda)^{-1} d\lambda.
\end{align*}
where $\Gamma:=\partial B(0;R/3)$ as given in Lemma~\ref{l:31}. Moreover, derivatives of $\mathcal{T}_{a,\xi}$ appearing in the above integrals can be calculated from expression of $\mathcal{T}_{a,\xi}$ in \eqref{eq:Ta}-\eqref{E:Lxi} using expansions of $w$ and $c$ from Theorem~\ref{thm:existence} as
\begin{align*}
    \mathcal{T}_{0,0}'&=-15k^2\partial_z(\cos(z))(\partial^2_{z}-1), \\
    \dot{\mathcal{T}}_{0,0}&=ik^4(1-5\partial^{4}_{z}),\\
    \dot{\mathcal{T}}_{0,0}'&=-i15k^2(3\cos(z)\partial^2_{z}-2\sin(z)\partial_{z}-\cos(z)),\\
    \mathcal{T}_{0,0}''&=\frac{15}{2}\partial_{z}(11+15\partial^2_{z}-\cos(2z)(\partial^2_{z}-1)),\\
   \ddot{ \mathcal{T}}_{0,0}&=10k^4\partial^3_{z}.
\end{align*}
From \eqref{eq:T0ei}, we observe that the operator \( \mathcal{T}_{0,0} \) acts on \( e^{inz} \) as
\[
\mathcal{T}_{0,0} e^{inz} = i \omega_{n,0} e^{inz}.
\]
Using this, we can compute the action of \( \mathcal{T}_{0,0} \) on \( \cos(nz) \) and \( \sin(nz) \) as
\begin{align*}
    \mathcal{T}_{0,0} (\cos(nz)) &= -\omega_{n,0} \sin(nz), \\
    \mathcal{T}_{0,0} (\sin(nz)) &= \omega_{n,0} \cos(nz).
\end{align*}
Next, we examine the action of the inverse operator \( (\mathcal{T}_{0,0} - \lambda)^{-1} \) on \( \cos(nz) \) and \( \sin(nz) \). Let us express the action on \( \cos(nz) \) and \( \sin(nz) \) as follows
\begin{align*}
    (\mathcal{T}_{0,0} - \lambda)^{-1} \cos (nz )&= \mathcal{A} \cos (nz) + \mathcal{B} \sin (nz), \\
    (\mathcal{T}_{0,0} - \lambda)(\mathcal{A} \cos (nz) + \mathcal{B} \sin (nz)) &= \cos (nz).
\end{align*}
This leads to the following system of equations
\begin{align*}
    \mathcal{A} \omega_{n,0} + \mathcal{B} \lambda &= 0, \\
    -\mathcal{A} \lambda + \mathcal{B}\omega_{n,0} &= 1.
\end{align*}
Solving this system yields the values for \( \mathcal{A} \) and \( \mathcal{B} \)
\[
\mathcal{A} = -\frac{\lambda}{\omega_{n,0} ^2+ \lambda^2}, \quad
\mathcal{B} = \frac{\omega_{n,0}}{ \omega_{n,0} ^2+ \lambda^2}.
\]
Thus, the action of the operator \( (\mathcal{T}_{0,0} - \lambda)^{-1} \) can be expressed as
\begin{equation}\label{T00}
\begin{aligned}
    (\mathcal{T}_{0,0} - \lambda)^{-1} \cos(nz) &= -\frac{\lambda}{\omega_{n,0}^2 + \lambda^2} \cos(nz) 
    + \frac{\omega_{n,0}}{\omega_{n,0}^2 + \lambda^2} \sin(nz), \\
    (\mathcal{T}_{0,0} - \lambda)^{-1} \sin(nz) &= \frac{\omega_{-n,0}}{\omega_{n,0}^2 + \lambda^2} \cos(nz) 
    - \frac{\lambda}{\omega_{n,0}^2+ \lambda^2} \sin(nz).
\end{aligned}
\end{equation}

Now, we are ready to compute the coefficients of \(a, \xi, a\xi, \xi^{2},\text{and} \ a^{2}\) in the expansions of $\phi_i$, $i=1,2,3$. For example, let coefficient of a in $\phi_1$ be $\phi_1^a$, then
\begin{align*}
    \phi^{a}_{1}(z) &= \mathcal{P}'_{0,0} (\cos(z)) \\
    &= \frac{1}{2\pi i} \oint_{\Gamma} (\mathcal{T}_{0, 0}  - \lambda)^{-1}
    \mathcal{T}'_{0,0} (\mathcal{T}_{0, 0}  - \lambda)^{-1} (\cos(z)) \, d\lambda.
\end{align*}
Applying the operator action,
\begin{align*}
    (\mathcal{T}_{0, 0}  - \lambda)^{-1}(\cos(z)) &=\frac{-\cos(z)}{\lambda},\\
    \mathcal{T}'_{0,0} (\mathcal{T}_{0, 0}  - \lambda)^{-1}(\cos(z)) &=\frac{30k^{2}}{\lambda}\sin(2z),
\end{align*}
from the equation \ref{T00}
\begin{align*}
    (\mathcal{T}_{0, 0}-\lambda)^{-1}(\sin(2z))&=\frac{-\lambda}{\lambda^{2}+900k^{8}}\sin(2z)+ \frac{30k^{4}}{\lambda^{2}+900k^{8}}\cos(2z)
\end{align*}
Substituting these into our integral and using partial fraction, we get
\begin{align*}
    \phi_{1}^{a}(z) &= \frac{\cos2z}{k^{2}}.
\end{align*}
Similarly, we compute the remaining coefficients in eigenfunctions $\phi_i$, $i=1,2,3$ and obtain their expressions as given in Lemma~\ref{lem:eigenfunctions}.

\bibliographystyle{amsalpha} 
\bibliography{stability.bib}

\end{document}